\newtheorem{thm}{Theorem}[section]
\newtheorem{prop}[thm]{Proposition}
\newtheorem{lem}[thm]{Lemma}
\theoremstyle{definition}
\newtheorem{ex}[thm]{Example}
\newtheorem{rem}[thm]{Remark}
\title{Some algebraic consequences \\of Green's hyperplane restriction theorems}
\author{Mats Boij}
\address{Department of Mathematics\\
Royal Institute of Technology\\
S-100 44 Stockholm, Sweden\\
E-mail: boij@kth.se\\
}
\author{Fabrizio Zanello}
\address{Department of Mathematical Sciences\\
Michigan Technological University\\
Houghton, MI 49931-1295\\
E-mail: zanello@mtu.edu}
\begin{document}
\begin{abstract}
We discuss M. Green's paper \cite{Gr} from a new algebraic perspective, and provide applications of its results to level and Gorenstein algebras, concerning their Hilbert functions and the weak Lefschetz property. In particular, we will determine a new infinite class of symmetric $h$-vectors that cannot be Gorenstein $h$-vectors, which was left open in the recent work \cite{MNZ1}. This includes the smallest example previously unknown, $h=(1,10,9,10,1)$. As M. Green's results depend heavily on the characteristic of the base field, so will ours. The appendix will contain a new argument, kindly provided to us by M. Green, for Theorems 3 and 4 of \cite{Gr}, since we had found a gap in the original proof of those results during the preparation of this manuscript. 
\end{abstract}

\maketitle

\section{Introduction}
The goal of this paper is to explore some consequences of the beautiful results of Mark Green contained in the 1988 paper \cite{Gr}, which supply upper bounds on the dimensions of linear series on projective spaces obtained by hyperplane restrictions. We will rephrase those results in an algebraic language that will allow us to obtain some interesting applications to the study of graded level, and especially Gorenstein, algebras.

The results contained in \cite{Gr}, especially the last two  - even though they are very natural and in the same spirit of the very classical theorems of Macaulay and Gotzmann - have not been very much employed in commutative algebra after their publication, except perhaps only by Bigatti-Geramita-Migliore~\cite{BGM}. 

Theorem 1 of [8] has been widely used in the literature. For instance, it has been a key tool to give simpler proofs of Gotzmann's persistence theorem (a result originally proved in \cite{Go} over an arbitrary Noetherian ring) when the base ring is a field (see \cite{BH,
IK,IKl}). Also, see the subsequent generalizations of Green's theorem by Herzog and Popescu~\cite{HP} in characteristic zero and Gasharov~\cite{Ga} in characteristic $p$. More recently, the second author, along with Migliore and Nagel, has extensively used Theorem 1 of \cite{Gr} in a few works concerning the study of artinian algebras (see \cite{MZ,MNZ1,MNZ2}).

In this paper we will discuss the two other main results of Green's paper (\cite[Theorems 3 and 4]{Gr}), by presenting them in a current form more useful to our purposes, and we will rely on them to show some new properties of level and Gorenstein algebras. In the preparation of this manuscript, we found gaps in the original proofs of those two theorems of Green's  that we were unable to fix. However, M. Green has recently provided us (personal communication) with a revised argument that shows that Theorem 3 is true in any characteristic different from two, and that Theorem 4 holds in characteristic zero and large enough positive characteristic. As a consequence of this, our results will have the same dependence of the characteristic. We will include a sketch of the new proofs for those two theorems of Green in an appendix to this paper.

Green's results will prove themselves very useful here in the study of some limit cases for Hilbert functions and even for the weak Lefschetz property. The most important of our results is perhaps that of an infinite class of symmetric $h$-vectors which cannot be the $h$-vector of a Gorenstein algebra, filling a gap left open in  the latest work in this direction (see \cite[Question 12 and the comment following it]{MNZ1}.) In particular, the smallest candidate whose nature was not determined in \cite{MNZ1}, $h=(1,10,9,10,1)$, is {\em not} a Gorenstein $h$-vector.

We believe that Green's paper will deserve further attention in the future, possibly from a still different perspective, since we are sure that we have not fully exploited the power of Green's results in this article.

\section{M. Green's results and first applications}
Before discussing Green's results, we now need to recall the definitions and the main facts we will use throughout this paper. We consider standard graded artinian algebras $A=\bigoplus_{i=0}^e A_i=R/I$, where $R=k[x_1,\dots,x_r]$, $k$ is (unless otherwise specified) an infinite field, $I$ is a homogeneous ideal of $R$, and the $x_i$'s have degree 1. 

We write the {\em Hilbert function} (or {\em $h$-vector}, since we are in the artinian case) of $A$ as $h(A)=h=(h_0,h_1,\dots,h_e)$, where $h_i=\dim_k A_i$ and $e$ is the last index such that $\dim_k A_e>0$. Since we may suppose that $I$ does not contain non-zero forms of degree 1, $r=h_1$ is defined as the {\em codimension} of $A$.

The {\em socle} of $A$ is the annihilator of the maximal homogeneous ideal $\overline{m}=(\overline{x_1},\dots,\overline{x_r})\subseteq A$, namely $soc(A)=\lbrace a\in A {\ } \mid {\ } a\overline{m}=0\rbrace $. Since $soc(A)$ is a homogeneous ideal, we can define the {\em socle-vector} of $A$ as $s(A)=s=(s_0,s_1,\dots,s_e)$, where $s_i=\dim_k soc(A)_i$. Notice that $h_0=1$, $s_0=0$ and $s_e=h_e>0$. The integer $e$ is called the {\em socle degree} of $A$ (or of $h$). 
 
If $s=(0,0,\dots,0,s_e=t)$, we say that the algebra $A$ (or its $h$-vector) is {\em level}. In particular, the case $t=1$ is called {\em Gorenstein}.

An artinian algebra $A=\bigoplus_{i=0}^e A_i$ is said to have the {\em weak Lefschetz property} ({\em WLP}) if there exists a linear form $L\in R$ such that, for all indices $i=0,1,\dots,e-1$, the multiplication map \lq \lq $\cdot L$" between the $k$-vector spaces $A_i$ and $A_{i+1}$ has maximal rank. In this case, the general linear form has this property.

Lots of research has been performed over the last few years in order to understand the structure of Gorenstein algebras, their $h$-vectors and the existence of the weak Lefschetz property. The main contributions to the subject, begun with Stanley's seminal paper \cite{St1}, include \cite{BI,Bo,BL,BE,Ha,HMNW,IK,IS,MN,MNZ1,MNZ2,Za}.

Recall that, for $n$ and $i$ positive integers, the {\em i-binomial expansion of n} is 
\[
n_{(i)} = \binom{n_i}{ i}+\binom{n_{i-1}}{i-1}+\dots+\binom{n_j}{j},
\]
 where $n_i>n_{i-1}>\dots>n_j \geq j \geq 1$. It is a standard fact that such an expansion always exists and is unique (see, e.g.,  \cite[Lemma 4.2.6]{BH}).
 
Following \cite{BG}, define, for any integers $a$ and $b$,
\[
\left(n_{(i)}\right)_{a}^{b}=\binom{n_i+b}{i+a}+\binom{n_{i-1}+b}{i-1+a}+\dots+\binom{n_j+b}{j+a},
\]
where, as usual, we set $\binom{m}{q}=0$ whenever $m<q$ or $q<0$.

Let us finally recall Macaulay's theorem, which characterizes the sequences of integers that may occur as Hilbert functions of  standard graded artinian algebras (Macaulay's result actually holds, with the obvious  modifications, also in the non-artinian case):

\begin{thm}[Macaulay]\label{mac}
Let $h=(h_i)_{0\leq 1\leq e}$ be a sequence of positive integers, such that $h_0=1$ and $h_1=r$. Then $h$ is the Hilbert function of some standard graded artinian algebra if and only if, for every $d\geq 1$,
$$h_{d+1}\leq \left((h_d)_{(d)}\right)^{1}_{1}.$$
\end{thm}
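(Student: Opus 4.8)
The plan is to prove the two implications separately, in both cases exploiting the theory of lexicographic (lex) ideals, which reduces the problem to a purely combinatorial count governed by the $i$-binomial expansion recalled above.

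For the necessity direction, suppose $h$ is the Hilbert function of $A=R/I$. First I would reduce to the case of a monomial ideal: fixing a term order on $R$ and passing to the initial ideal $\operatorname{in}(I)$, the standard monomials form a $k$-basis of $R/\operatorname{in}(I)$ in each degree, so $\operatorname{in}(I)$ has the same Hilbert function as $I$; thus we may assume $I$ is monomial. The heart of the argument is then Macaulay's combinatorial inequality: among all monomial ideals with $\dim_k(R/I)_d=h_d$ prescribed, the lex-segment ideal $L$ (whose degree-$d$ part is spanned by the largest $\dim_k R_d-h_d$ monomials in lex order) maximizes $\dim_k(R/I)_{d+1}$, and a direct count shows this maximum value is exactly $\left((h_d)_{(d)}\right)_1^1$. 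Since $\dim_k(R/I)_{d+1}\leq\dim_k(R/L)_{d+1}$, the bound follows.

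For the sufficiency direction, given a sequence $h$ satisfying the stated inequalities, I would construct the algebra directly as $A=R/L$, where $L$ is the lex ideal defined degree by degree by letting $L_d$ be the span of the largest $\dim_k R_d-h_d$ monomials of $R_d$ in lex order. The only thing to verify is that $L$ is genuinely an ideal, i.e. that $R_1\cdot L_d\subseteq L_{d+1}$ for every $d$. This is precisely where the hypothesis enters: by the same binomial count as above, the number of monomials obtained by multiplying the lex segment $L_d$ by all the variables is minimized for lex segments, and the inequality $h_{d+1}\leq\left((h_d)_{(d)}\right)_1^1$ guarantees that $\dim_k R_{d+1}-h_{d+1}$ is at least this number, forcing the product to land inside $L_{d+1}$. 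The quotient $R/L$ is then a standard graded artinian algebra with Hilbert function exactly $h$.

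The main obstacle is the core extremality lemma for lex ideals used in both directions, namely that passing to the lex segment can only increase the growth of the quotient. The cleanest route is a \emph{compression} argument: one replaces $I$ in each degree by the corresponding lex segment and shows inductively, one variable at a time, that this operation never decreases $\dim_k(R/I)_{d+1}$, together with the explicit verification that for a lex segment the growth is given exactly by the operation $(\,\cdot\,)_1^1$ applied to the binomial expansion. Both the compression step and the exact binomial bookkeeping rely essentially on the existence and uniqueness of the $i$-binomial expansion, and this is where nearly all of the real work lies.
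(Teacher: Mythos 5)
The paper does not actually prove this theorem: its ``proof'' is the single line ``See \cite[Theorem 4.2.10]{BH}.'' Your outline is precisely the standard lex-ideal argument that the cited reference carries out --- reduction to monomial ideals via initial ideals, Macaulay's extremality property of lex segments for the necessity direction, and the degree-by-degree construction of a lex ideal for sufficiency --- so in substance you are reproducing the approach the paper defers to, and the structure of your plan is correct. Two remarks. First, in the sufficiency step the inclusion $R_1\cdot L_d\subseteq L_{d+1}$ does not follow from a dimension count alone for arbitrary subspaces; it follows here because the span of $R_1\cdot L_d$ is again a lex segment and lex segments of a fixed degree are totally ordered by inclusion --- you should state this fact explicitly, since it is exactly what converts the numerical hypothesis into an ideal-theoretic conclusion. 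Second, as you yourself note, the compression lemma (lex segments maximize quotient growth, with the growth of a lex segment computed exactly by $\left(\,(h_d)_{(d)}\right)^{1}_{1}$) is where all the real work lies, and your proposal only names the strategy rather than proving it; as a self-contained proof this is therefore a detailed and correct roadmap rather than a complete argument, but it is the right roadmap, matching the source the paper cites.
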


\begin{proof}
See \cite[Theorem 4.2.10]{BH}.
\end{proof}

Let us now begin by rephrasing Green's first hyperplane  restriction theorem~\cite[Theorem 1]{Gr}
in the language of this paper. It supplies an upper bound for the Hilbert function of the quotient of a given graded algebra (again, not necessarily artinian) by a general linear form. 

\begin{thm}\cite[Theorem 1]{Gr}\label{1}
Let $h_d$ be the entry of degree $d$ of the Hilbert function of $R/I$, and let $L$ be a general linear form of $R$. Then the degree $d$ entry, $h_d^{'}$, of the Hilbert function of $R/(I,L)$ satisfies the inequality
$$h_d^{'}\leq \left((h_d)_{(d)}\right)^{-1}_{0}.$$
\end{thm}

As Green remarked in his paper, the bound provided by the above theorem can be sharp, even simultaneously, in all degrees (indeed, we know that for instance it is achieved when the ideal $I$ is a lex-segment). The next two results of Green, that we are going to present below, provide very useful information on the ideal $I$ in two particular cases when the upper bound is sharp in some specific entry.
With our notation, we have:

\begin{thm}\cite[Theorem 3]{Gr}\label{3} Assume that $\operatorname{char} k\ne 2$. 
If, for some positive integer $m$, $h_d=\binom{m+d}{d}$, and $h^{'}_d=\binom{m-1+d}{d}$, then the degree $d$ graded piece of the ideal $I$ is spanned by an $m$-dimensional linear space. In other words,
$$I_d=R_{d-1}\cdot \langle L_1,\dots,L_{r-m-1}\rangle,$$
where $L_1,L_2,\dots,L_{r-m-1}$ are linearly independent linear forms of $R$.
\end{thm}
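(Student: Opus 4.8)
The plan is to turn the statement into a purely geometric assertion about a base locus and then prove it by induction, using Green's first theorem (Theorem \ref{1}) to descend to a general hyperplane. First I would record the key dimension count that makes the conclusion cheap once the right linear space is found. Write $\Lambda=V(L_1,\dots,L_{r-m-1})\subseteq\mathbb{P}^{r-1}$ and $P=(L_1,\dots,L_{r-m-1})$ for independent linear forms $L_i$. Then $R/P$ is a polynomial ring in $m+1$ variables, so $\dim_k P_d=\binom{r-1+d}{d}-\binom{m+d}{d}$, while the hypothesis $h_d=\binom{m+d}{d}$ gives $\dim_k I_d=\binom{r-1+d}{d}-\binom{m+d}{d}=\dim_k P_d$. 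Hence the mere inclusion $I_d\subseteq P_d$, which says exactly that every form of $I_d$ vanishes on $\Lambda$, upgrades automatically to $I_d=P_d=R_{d-1}\cdot\langle L_1,\dots,L_{r-m-1}\rangle$. So the entire theorem reduces to producing a linear subspace $\Lambda\cong\mathbb{P}^{m}$ in the common zero locus of $I_d$.

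To produce $\Lambda$ I would induct on the parameter $m$, descending by a general hyperplane section. Let $L$ be a general linear form and pass to $\bar R=R/(L)\cong k[y_1,\dots,y_{r-1}]$ with the image $\bar I$ of $I$; by definition $\dim_k(\bar R/\bar I)_d=h'_d=\binom{m-1+d}{d}$, which is the value in degree $d$ of a polynomial ring in $m$ variables, i.e. the bound of Theorem \ref{1} attained with equality. Assuming the theorem known for the parameter $m-1$, I would apply it to $(\bar R,\bar I)$ to obtain a linear $\mathbb{P}^{m-1}$ contained in the base locus of $\bar I_d$ inside the hyperplane $V(L)$. Letting $L$ range over the dual projective space, the base locus of $I_d$ then has general hyperplane section containing a linear $\mathbb{P}^{m-1}$; a component of that base locus whose general hyperplane section is a linear space has degree one, hence is itself a linear $\mathbb{P}^{m}$, and this is the required $\Lambda$. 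The gluing step is therefore comparatively routine, resting only on the fact that a positive-dimensional variety of degree one is linear.

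The main obstacle is the \emph{propagation} of Green's equality to the hyperplane section: to invoke the induction hypothesis on $(\bar R,\bar I)$ I must know not only that $\dim_k(\bar R/\bar I)_d=\binom{m-1+d}{d}$, but also that restricting $\bar I$ by one further general linear form again attains the Green bound $\binom{m-2+d}{d}$. This does not follow formally from Theorem \ref{1}, which supplies only the upper bound; the matching lower bound requires genuine control of the Hilbert function of $R/I$ in the neighbouring degree $d-1$. Indeed, the four-term exact sequence for multiplication by $L$ yields $h_{d-1}=\binom{m+d-1}{d-1}+\dim_k\big(0:_{R/I}L\big)_{d-1}$, and since socle elements of $R/I$ in degree $d-1$ lie in that kernel, $h_{d-1}$ may a priori exceed $\binom{m+d-1}{d-1}$, so the lower-degree behaviour is not pinned down by the degree-$d$ data alone. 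I expect this to be exactly where the real work lives, and where Green's revised argument is concentrated. A promising route to the needed control is to replace $I$ by its reverse-lexicographic generic initial ideal, which preserves all the relevant Hilbert functions and behaves well under a general hyperplane section (setting the last variable to zero), thereby recasting both hypotheses as a rigid combinatorial condition on a Borel-fixed monomial ideal; one would then have to transfer the resulting structure back to $I_d$ itself, a step that requires care since passing to the initial ideal destroys the linear geometry we are trying to recover.

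Finally, the plan must confront the base case $m=1$ directly, because descending from $m=1$ would land in the excluded case $m=0$, where the statement is simply false: for $m=0$ one checks in $k[x,y]$ that $I_2=\langle x^2,y^2\rangle$ satisfies $h_2=1$ and $h'_2=0$ yet is not $R_1\cdot\langle L\rangle$ for any linear form $L$. The hypothesis $m\ge 1$ is thus genuinely used, and so is $\operatorname{char}k\ne 2$: already for $m=1$, $r=3$, $d=2$, the ideal $(x_1^2,x_2^2,x_3^2)$ satisfies $h'_2=\binom{2}{2}=1$ precisely when $\operatorname{char}k=2$ — the relevant $3\times 3$ determinant equals $-2abc$ — and in that characteristic its degree-two part is manifestly not of the asserted form. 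Any successful proof must therefore break down exactly at $m=0$ and at characteristic two, which is a useful consistency check on the argument sketched above.
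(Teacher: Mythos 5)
Your reduction via the dimension count is correct (once a linear $\Lambda\cong\mathbb{P}^m$ lies in the zero locus of $I_d$, the inclusion $I_d\subseteq P_d$ together with $\dim_k I_d=\dim_k P_d$ forces equality), and your consistency checks at $m=0$ and in characteristic two are accurate (the latter is exactly the paper's Remark~\ref{rem2.4}). But the proposal is a plan, not a proof: the two places you yourself flag as where "the real work lives" are exactly where all the content of Green's theorem sits, and neither is carried out. First, the inductive step is not available: to apply the statement for $m-1$ to $(\bar R,\bar I)$ you must know that a further general restriction of $\bar I$ again attains Green's bound $\binom{m-2+d}{d}$, and, as you correctly observe, this does not follow from Theorem~\ref{1}; your proposed repair via generic initial ideals is only a hope, and the transfer back from a Borel-fixed monomial ideal to the linear geometry of $I_d$ is left entirely open. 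Second, the base case $m=1$ is not proved at all, and it cannot be routine, since it is precisely where the characteristic-two counterexample $(x_1^2,x_2^2,x_3^2)$ lives. The decisive symptom is that the hypothesis $\operatorname{char}k\ne 2$ is never used in any step you actually complete, yet the theorem is false in characteristic two, so the completed steps cannot amount to more than the routine outer shell of a proof.

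For comparison, the paper's argument (the appendix sketch, due to Green) also inducts on hyperplane restrictions, producing for each general hyperplane $H$ a linear space $P_H$ of dimension $m$ with $W(-H)=I_{d-1}(P_H)$, but its two essential mechanisms have no counterpart in your plan. (i) Constancy of $P_H$ is proved by a pencil argument: a point $p\in P_{H_1}$ not on $H_1$ is forced into $P_{H_2}$ by taking $H$ to be the hyperplane spanned by $H_1\cap H_2$ and $p$; this works uniformly for all $m\ge 1$, whereas your Bertini-type gluing (``a component whose general hyperplane section is a linear space has degree one'') is vacuous exactly in the base case $m=1$, where hyperplane sections of a curve are finite point sets and carry no information. (ii) The degenerate possibility $P_H\subseteq H$ for general $H$ is excluded by noting that it would give $\ell(H)^2R_{d-2}\subseteq W$, and in characteristic different from two the squares of linear forms span $R_2$, forcing $R_d\subseteq W$, a contradiction with the codimension of $W$. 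This is the single point where the characteristic enters the paper's proof, and it is precisely the kind of step your outline is missing.
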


\begin{rem} \label{rem2.4}
  Notice that the conclusion of Theorem~\ref{3} no longer holds if the base field $k$ has characteristic two, since in this case the space $V=(x_1^2,x_1^2,\dots,x_r^2)$ satisfies the conditions of the theorem while it is not generated by linear forms. 
\end{rem}

As far as (artinian) level algebras are concerned, this result of Green's has the following important application concerning their Hilbert functions:

\begin{prop}\label{pr3}
With the notation and under the hypotheses of Theorem \ref{3}, suppose that the algebra $A=R/I$ having $h$-vector $h$ has only a zero socle in all degrees $\leq d-1$ (for instance, when $A$ is level). Then all the entries of $h$ up to degree $d$ are determined. Namely, we have: $$h=\left(1,h_1=m+1,h_2=\binom{m+2}{2},\dots,h_{i}=\binom{m+i}{i},\dots,h_d=\binom{m+d}{d}\right).$$
\end{prop}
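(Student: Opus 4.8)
The plan is to show that, in every degree $i\le d$, the ideal $I$ agrees with the ideal $J=(L_1,\dots,L_{r-m-1})$ generated by the linear forms produced by Theorem~\ref{3}. Since the $L_j$ are linearly independent, a linear change of coordinates turns $J$ into $(x_1,\dots,x_{r-m-1})$, so $R/J$ is a polynomial ring in $m+1$ variables and $\dim_k(R/J)_i=\binom{m+i}{i}$ for every $i$. Once the equality $I_i=J_i$ is established for all $i\le d$, the asserted Hilbert function follows at once from $h_i=\dim_k R_i-\dim_k I_i=\dim_k(R/J)_i$. Theorem~\ref{3} already supplies the top equality $I_d=J_d$, so the work lies in propagating it to the lower degrees, which I would do by proving the two inclusions separately.

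For the inclusion $J_i\subseteq I_i$ I would use descending induction on $i$, anchored at the base case $I_d=J_d$. Assume $J_{i+1}\subseteq I_{i+1}$ with $i\le d-1$, and take $g\in J_i$. Because $J$ is an ideal, $gR_1\subseteq J_{i+1}\subseteq I_{i+1}$ by the inductive hypothesis, so the class $\bar g\in A_i$ is annihilated by $\overline{m}$, i.e. $\bar g\in soc(A)_i$. The hypothesis that $A$ has zero socle in all degrees $\le d-1$ then forces $\bar g=0$, that is $g\in I_i$. This is precisely where the socle assumption enters, and it is the heart of the argument. Iterating down to $i=1$ gives $J_i\subseteq I_i$ throughout; in particular $J_1\subseteq I_1=0$, which forces $r=m+1$ (so the $L_j$ are in fact vacuous) as a consistency check.

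The reverse inclusion $I_i\subseteq J_i$ is a routine regularity-type argument that does not use the socle. Given $f\in I_i$ with $i\le d$, the fact that $I$ is an ideal gives $R_{d-i}\cdot f\subseteq I_d=J_d$, so in the quotient the class $\bar f\in(R/J)_i$ is annihilated by the whole space $(R/J)_{d-i}$. As $R/J$ is a polynomial ring, it is a domain and $(R/J)_{d-i}\ne 0$, whence $\bar f=0$ and $f\in J_i$. Combining the two inclusions yields $I_i=J_i$ for every $i\le d$, and therefore $h_i=\binom{m+i}{i}$, as claimed.

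I expect the only genuine subtlety to be the descending induction: one must check that the socle hypothesis is invoked solely in degrees $1,\dots,d-1$, which is exactly the range in which it is assumed, and that the induction is correctly anchored at the conclusion $I_d=J_d$ of Theorem~\ref{3}. The reverse inclusion and the final Hilbert-function count are then immediate.
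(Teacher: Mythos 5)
Your proof is correct and takes essentially the same route as the paper: both arguments identify $I_i$ with $(L_1,\dots,L_{r-m-1})_i$ for all $i\le d$ (using the socle hypothesis to propagate the degree-$d$ equality of Theorem~\ref{3} downward, and the fact that no further elements of $I$ can exist below degree $d$ because of the equality there), and then read off the Hilbert function of a polynomial ring in $m+1$ variables. If anything, your write-up is more explicit than the paper's about which hypothesis yields which inclusion --- the socle condition gives $(L_1,\dots,L_{r-m-1})_i\subseteq I_i$ by descending induction, while the domain property of $R/(L_1,\dots,L_{r-m-1})$ gives the reverse inclusion --- but this is a matter of presentation, not a different proof.
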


\begin{proof} 
We know by Theorem~\ref{3} that $I_d = \langle L_1,\dots,L_{r-m-1}\rangle R_{d-1}$. Since there is a zero socle in all degrees less than $d$, we conclude that $I\subseteq(L_1,\dots,L_{r-m-1})$. Therefore, $I_i\subseteq(L_1,\dots,L_{r-m-1})_i$ for all $i\leq d$. On the other hand, there can be no other generators of $I$ in degrees between $1$ and $d$, since we have equality in degree $d$. Hence $I_i=(L_1,\dots,L_{r-m-1})_i$ for $0\le i\le d$, and the Hilbert function $h$ must be the Hilbert function of a linear space of dimension $m$, i.e., $h_i=\binom{m+i}{i}$. 
\end{proof}

Let us now present the last Theorem of \cite{Gr}, which describes the degree $d$ graded piece of $I$ when, with  the usual notation, $h_d$ and $h^{'}_d$ are of another particular form ($h^{'}_d$ again being the maximum allowed by Theorem \ref{1}). Again, Green's result will be presented in a different way from the original paper, consistently with our notation. Notice that the integer $m$ below will not run from $m=0$ (as stated in \cite{Gr}, probably because of a typo), but from $m=1$.

\begin{thm}\cite[Theorem 4]{Gr}\label{4} Assume that $\operatorname{char} k=0$\footnote{In fact, it is sufficient to assume $\operatorname{char} k\ge m$. Notice that the example of Remark \ref{rem2.4} can easily be generalized to arbitrary characteristic to show that the assumption on the characteristic that we make for this theorem is again necessary.} and suppose that, for some integer $m$, $1\leq m \leq d$, we have
$$h_d=md+1-\binom{m-1}{2}=%%\binom{d+2}{2}-\binom{d-m+2}{2}=
\binom{d+1}{d}+\binom{d}{d-1}+\dots+\binom{d-(m-2)}{d-(m-1)}$$
and $h^{'}_d=m$ (the latter being the maximal possible value according to Theorem \ref{1}). 
Then, in degree $d$, $I$ is the ideal of a plane curve of degree $m$. In other words,
$$I_d=\langle L_1,L_2,\dots,L_{r-3}\rangle R_{d-1} +F\cdot R_{d-m},$$
where $L_1,L_2,\dots,L_{r-3}$ are linearly independent linear forms and $F$ is a form of degree $m$.
\end{thm}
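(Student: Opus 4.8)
The plan is to recognize the numerology as the extremal (equality) case of Theorem~\ref{1} attached to a plane curve, and then to recover the geometry from that rigidity. Writing the $d$-binomial expansion
$$h_d=\binom{d+1}{d}+\binom{d}{d-1}+\cdots+\binom{d-m+2}{d-m+1},$$
the operator of Theorem~\ref{1} replaces each $\binom{n_j}{j}$ by $\binom{n_j-1}{j}$ and hence returns $\binom{d}{d}+\binom{d-1}{d-1}+\cdots+\binom{d-m+1}{d-m+1}=m$, so the hypothesis $h'_d=m$ says exactly that Theorem~\ref{1} is sharp in degree $d$. On the other hand $h_d=\binom{d+2}{2}-\binom{d-m+2}{2}$ is precisely $\dim_k\big(k[x_1,x_2,x_3]/(F)\big)_d$ for a form $F$ of degree $m$, so the target structure is clear. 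First I would write down, for a general $L$, the exact sequence
$$0\to\big((I:L)/I\big)_{d-1}\to (R/I)_{d-1}\xrightarrow{\ \cdot L\ }(R/I)_d\to (R/(I,L))_d\to 0,$$
and then apply Theorem~\ref{1} a second time: the $d$-binomial expansion of $m=1+\cdots+1$ gives the bound $0$, forcing $h''_d=0$, i.e.\ two general hyperplane sections annihilate degree $d$. Geometrically this forces $\dim V(I)\le 1$ and tightly controls the base locus of the linear system $I_d$.

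The second step is to reduce to three variables, i.e.\ to produce $r-3$ linearly independent linear forms $L_1,\dots,L_{r-3}$ lying in the saturation of $I$, modulo which we land in $k[y_1,y_2,y_3]$. Here I would argue that the degree-$m$ base curve of $|I_d|$ spans only a $\mathbb{P}^2$: the point is that the \emph{exact} value of $h_d$, and in particular its constant correction $1-\binom{m-1}{2}$, encodes the arithmetic genus of a plane curve, whereas a nondegenerate degree-$m$ curve spanning a larger $\mathbb{P}^s$ would produce a strictly different Hilbert function. The purely linear part of this analysis is governed by Theorem~\ref{3}, whose $m=1$ instance coincides with the present statement. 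After this reduction, a dimension count in the three-variable ring yields $\dim_k I_d=\binom{d-m+2}{2}=\dim_k R_{d-m}$.

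The heart of the argument is then the common-factor statement in three variables. Restricting to a general line $\ell\subset\mathbb{P}^2$, the image $\bar I_d$ is a subspace of $\bar R_d\cong k[s,t]_d$ of codimension $h'_d=m$, and every element of $\bar I_d$ is divisible by one fixed degree-$m$ form $F_\ell$, namely the form cutting out the $m$ base points of $|I_d|$ on $\ell$. I would then show that, as $\ell$ varies, these fiberwise factors $F_\ell$ patch into a single global form $F$ of degree $m$ with $I_d\subseteq F\cdot R_{d-m}$; equality $I_d=F\cdot R_{d-m}$ is then forced by the dimension count above, and adding back the contribution $\langle L_1,\dots,L_{r-3}\rangle R_{d-1}$ gives the stated description of $I_d$.

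The main obstacle, as I see it, is exactly this last globalization: deducing a genuine common factor $F$ of degree $m$ dividing all of $I_d$ from the mere existence of a fiberwise common factor on every general line. This is the step sensitive to the characteristic. In characteristic $p<m$, Frobenius and inseparability phenomena---the generalization of the example $V=\langle x_1^2,\dots,x_r^2\rangle$ of Remark~\ref{rem2.4} alluded to in the footnote to Theorem~\ref{4}---produce spaces with exactly the right numerics but no actual common factor, so the conclusion genuinely fails; this is what forces the hypothesis $\operatorname{char}k=0$ (or $\operatorname{char}k\ge m$). I expect this globalization, precisely the point where the original proof had a gap, to be by far the hardest part, while the reduction to three variables and the dimension counts should be comparatively routine.
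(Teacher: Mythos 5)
Your proposal correctly decodes the numerology and correctly locates the difficulty, but it is not a proof: each of the three steps that would have to carry the argument is either circular or missing. First, your reduction to three variables argues that ``the degree-$m$ base curve of $|I_d|$ spans only a $\mathbb{P}^2$'' --- but the existence of a degree-$m$ curve in the base locus of $|I_d|$ is precisely what the theorem asserts; the hypotheses hand you only the two numbers $h_d$ and $h'_d$, and nothing at this stage guarantees that $I_d$ has any base curve at all (in the characteristic-$p$ examples recorded in the paper's appendix, $W=(x^pG,y^pG,z^pG)_d$, the numerics hold and yet no such curve exists). Second, the fiberwise claim is equally unfounded: a subspace of $k[s,t]_d$ of codimension $m$ need not admit a common factor of any positive degree --- the span of $s^d$, $t^d$ together with any $d-1-m$ further monomials has codimension $m$ and contains coprime elements --- so ``every element of $\bar I_d$ is divisible by one fixed degree-$m$ form $F_\ell$'' cannot be extracted from the codimension count; your justification again invokes ``the $m$ base points of $|I_d|$ on $\ell$,'' whose existence is the point at issue. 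Third, the globalization of the $F_\ell$ into a single $F$, which you yourself single out as the heart of the matter, is left entirely unargued. What remains is a plausible road map in which every load-bearing span is absent.

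For comparison, the argument in the paper's appendix (due to Green) runs quite differently: it is an induction on the degree $d$, not a reduction in the number of variables, with base case $d=m$, where the technique of Theorem~\ref{3} gives $W(-H)=I_{d-1}(P_H)$ for a $2$-plane $P_H$ depending a priori on the hyperplane $H$; a pencil argument shows $P_H$ is constant, and then $W=I_d(P)+(F)$ follows from a dimension count. In the inductive step $d>m$ one knows $W(-H)=I_{d-1}(C_H)$ with $C_H$ a plane curve possibly depending on $H$; comparing restrictions to a second hyperplane along a pencil gives $C_{H_1}\cap H=C_{H_2}\cap H$, and choosing a general point of a component of $C_{H_1}$ off $H_1$ forces $C_{H_1}=C_{H_2}$ --- \emph{unless} $C_H$ has a linear component lying inside $H$ itself. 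It is exactly in excluding this degenerate case (writing $F_H=\ell(H)^aG_H$) that the characteristic hypothesis enters: one needs powers of linear forms to span the relevant graded piece of $R$, which holds in characteristic zero or characteristic sufficiently large compared to $m$. So your instinct that the characteristic bites at the ``patching'' stage is right in spirit, but the actual mechanism --- an induction on $d$ transporting the curve from one hyperplane to another, with the spanning property of powers of linear forms closing the degenerate case --- is the substance of the proof, and it is precisely what your proposal does not supply.
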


This result of Green's also has a very interesting general consequence for level algebras:

\begin{prop}\label{pr4}
Assume that $\operatorname{char}k=0$ and suppose that the algebra $A=R/I$ having $h$-vector $h$ has a zero socle in all degrees $\leq d-1$. If $h_d=\binom{d+2}{2}-\binom{d-m+2}{2}$ for some positive integer $m\le d$, and $\dim_k (A/(L))_d=m$ for a general linear form $L$, then 
$$
h_i = \binom{i+2}{2}-\binom{i-m+2}{2}, \qquad 0\le i \le d,
$$
i.e., $h$ equals the Hilbert function of a plane curve of degree $m$ up to degree $d$.
\end{prop}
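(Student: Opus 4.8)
The plan is to follow the strategy of Proposition~\ref{pr3}, replacing the linear ideal used there with the ideal $J=(L_1,\dots,L_{r-3},F)$ of a plane curve of degree $m$ supplied by Theorem~\ref{4}. First I would check that the present hypotheses are exactly those of Theorem~\ref{4}: a direct binomial computation shows that $\binom{d+2}{2}-\binom{d-m+2}{2}=md+1-\binom{m-1}{2}$, so the assumption on $h_d$ matches the value required there, while $\dim_k(A/(L))_d=m$ is precisely the extremal value $h'_d=m$. Since $\operatorname{char}k=0$, Theorem~\ref{4} applies and yields $I_d=J_d$, where $L_1,\dots,L_{r-3}$ are linearly independent linear forms and $F$ is a form of degree $m$. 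The whole problem then reduces to proving that $I_i=J_i$ for every $i\leq d$, because the quotient $R/J$ has Hilbert function $\binom{i+2}{2}-\binom{i-m+2}{2}$, which is exactly the $h$-vector we are claiming.

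To obtain the two inclusions I would argue degree by degree, anchored at degree $d$ where $I_d=J_d$. For $I_i\subseteq J_i$ ($i\leq d$) I would use downward induction on $i$: given $f\in I_i$, each $x_jf$ lies in $I_{i+1}\subseteq J_{i+1}$, so $f$ lies in the colon $(J:(x_1,\dots,x_r))_i$; the key point is that this colon equals $J_i$. For the reverse inclusion I would use the hypothesis on the socle of $A$: if $I_i\neq J_i$ for some $i<d$, pick $g\in J_i\setminus I_i$; then the nonzero class of $g$ in $A_i$ is not in $soc(A)_i$ (which vanishes for $i\le d-1$), so some $x_jg\notin I_{i+1}$, while $x_jg\in J_{i+1}$ since $J$ is an ideal. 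Hence a strict inclusion $I_i\subsetneq J_i$ in degree $i<d$ forces a strict inclusion in degree $i+1$, and propagating upward contradicts $I_d=J_d$. Combining the two steps gives $I_i=J_i$ for all $i\leq d$, and reading off $\dim_k(R/J)_i$ finishes the proof.

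The main obstacle, and the one genuinely new feature compared with Proposition~\ref{pr3}, is the fact used in the downward induction that $(J:(x_1,\dots,x_r))=J$, i.e.\ that $R/J$ has no socle in degrees $\leq d-1$. In Proposition~\ref{pr3} the analogous ideal was generated by linear forms, so the quotient was a polynomial ring and this was immediate; here $J$ also contains the degree-$m$ form $F$. I would settle this by observing that, modulo the $r-3$ independent linear forms $L_1,\dots,L_{r-3}$, the ring $R/J$ becomes $k[y_1,y_2,y_3]/(\overline F)$ with $\overline F\neq 0$ of degree $m$ (if $\overline F$ vanished, $R/J$ would have the Hilbert function $\binom{i+2}{2}$ of a linear space, contradicting the value of $h_d$). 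Since $\overline F$ is a nonzerodivisor in $k[y_1,y_2,y_3]$, the quotient is a complete intersection of depth $2$, hence has positive depth and therefore zero socle in every degree; this gives $(J:(x_1,\dots,x_r))=J$ and simultaneously confirms that the Hilbert function of $R/J$ is $\binom{i+2}{2}-\binom{i-m+2}{2}$, as needed.
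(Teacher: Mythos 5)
Your proof is correct and takes essentially the same route as the paper: apply Theorem~\ref{4} in degree $d$, then show $I_i=(L_1,\dots,L_{r-3},F)_i$ for all $i\le d$, and read off the Hilbert function of the plane-curve ideal. If anything, you are more precise than the paper's own two-line argument: the vanishing socle of $A$ by itself only yields the inclusion $J_i\subseteq I_i$ (elements all of whose multiples land in $I$ must lie in $I$), and the reverse inclusion $I_i\subseteq J_i$ requires exactly the point you verify --- that $R/J\cong k[y_1,y_2,y_3]/(\overline F)$ with $\overline F\ne 0$ has positive depth, hence trivial socle, so $(J:(x_1,\dots,x_r))=J$ --- a step the paper's proof leaves implicit.
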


\begin{proof}
It follows from Theorem~\ref{4} that $I_d=(L_1,\dots,L_{r-3},F)_d$, but because the socle is trivial in all degrees less than $d$, we have that $I\subseteq (L_1,\dots,L_{r-3},F)$. Since we have equality in degree $d$, there can be no more generators in degrees less than $d$, and therefore $I_i=(L_1,\dots,L_{r-3},F)_i$ for $i=0,1,\dots,d$. Thus, the Hilbert function equals the Hilbert function of a plane curve of degree $m$, as desired. 
\end{proof}

\begin{rem}
Notice that when $m=1$, Theorems \ref{3} and \ref{4} coincide. Therefore, the $h$-vectors provided by Propositions \ref{pr3} and \ref{pr4}, as it can be easily checked, also coincide in that special case.
\end{rem}

In order to use the previous results in the next section, we still need the following lemma, due to R. Stanley (see \cite[bottom of p. 67]{St2}):

\begin{lem}[Stanley]\label{sta}
Let $A=R/I$ be an artinian Gorenstein algebra, and let $L\notin I$ be a linear form of $R$. Then the  $h$-vector of $A$ can be written as
$$h := (h_0,h_1,\ldots,h_e) = (1, b_1+c_1,\dots, b_e+c_e = 1),$$ where
$$b=(b_1=1,b_2,\dots,b_{e-1},b_e=1)$$
is the $h$-vector of $R/(I:L)$ (with the indices shifted by 1), which is a Gorenstein algebra, and
$$c=(c_0=1,c_1,\dots,c_{e-1},c_e= 0)$$
is the $h$-vector of $R/(I,L)$.
\end{lem}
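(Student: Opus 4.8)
The plan is to realize the decomposition $h_i = b_i + c_i$ as coming from a single short exact sequence built out of multiplication by $L$, and then to identify the two summands. For any homogeneous ideal $I$ and any linear form $L$, the map $\cdot L$ induces a degree-preserving sequence of graded $R$-modules
$$0 \longrightarrow \bigl(R/(I:L)\bigr)(-1) \xrightarrow{\ \cdot L\ } R/I \longrightarrow R/(I,L) \longrightarrow 0.$$
First I would check exactness: the map $\bar g \mapsto \overline{Lg}$ is well defined (if $g\in(I:L)$ then $Lg\in I$ and its image is zero) and injective (if $\overline{Lg}=0$ in $R/I$ then $Lg\in I$, so $g\in(I:L)$ and $\bar g = 0$); its image is $(I+LR)/I$, so the cokernel is $R/(I,L)$. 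Reading off dimensions in degree $i$ gives $h_i = \dim_k (R/(I:L))_{i-1} + \dim_k (R/(I,L))_i$, which is precisely $h_i = b_i + c_i$ with the stated index shift on $b$. This step uses nothing about the Gorenstein hypothesis.

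Next I would identify the two pieces. Since $R/(I,L)$ is a standard graded algebra, $c_0 = 1$, giving the first entry of $c$. The substance of the lemma is the claim that $R/(I:L)$ is Gorenstein, and for this I would pass to Macaulay's inverse systems. Writing $A = R/I$ with $I = \operatorname{Ann}(F)$ for a single form $F$ of degree $e$ (the socle degree), the colon ideal satisfies $(I:L) = \operatorname{Ann}(L \circ F)$, because $g \in (I:L)$ if and only if $(gL)\circ F = g \circ (L \circ F) = 0$. Since $L \notin I$ forces $L \circ F \neq 0$, the form $L \circ F$ has degree exactly $e-1$, so $R/(I:L)$ is again Gorenstein, now of socle degree $e-1$. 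Using the contraction action rather than differentiation keeps this argument valid in every characteristic, consistent with the absence of a characteristic hypothesis in the statement.

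It then follows that $b$, being the $h$-vector of a Gorenstein algebra of socle degree $e-1$ (re-indexed by the shift above), is symmetric, with $b_1 = \dim_k(R/(I:L))_0 = 1$ and $b_e = \dim_k(R/(I:L))_{e-1} = 1$, the latter because the top nonzero degree of an artinian Gorenstein algebra is its one-dimensional socle. Finally, combining $h_e = 1$ with $b_e = 1$ in the decomposition forces $c_e = h_e - b_e = 0$, giving the last entry of $c$ and completing the description. The main obstacle is this middle step: one must be sure that the inverse-systems identity $(I:L) = \operatorname{Ann}(L\circ F)$ and the nonvanishing $L \circ F \neq 0$ are set up correctly, since this is exactly where the Gorenstein structure enters and where a careless choice of module action could introduce a spurious characteristic restriction. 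Alternatively, the symmetry of $b$ can be obtained without inverse systems from Poincaré duality: since $b_i = \operatorname{rank}(\cdot L \colon A_{i-1} \to A_i)$ and the perfect pairings $A_j \times A_{e-j}\to A_e \cong k$ make $\cdot L\colon A_{i-1}\to A_i$ and $\cdot L \colon A_{e-i}\to A_{e-i+1}$ mutually transpose, these maps share the same rank, whence $b_i = b_{e+1-i}$.
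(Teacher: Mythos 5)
Your proposal is correct and follows essentially the same route as the paper: the decomposition $h_i=b_i+c_i$ comes from the same short exact sequence $0\to (R/(I:L))(-1)\xrightarrow{\cdot L} R/I\to R/(I,L)\to 0$, and the Gorensteinness of $R/(I:L)$ — which the paper dismisses as well-known — is exactly the ingredient you supply via Macaulay's inverse systems (your identity $(I:L)=\operatorname{Ann}(L\circ F)$, with the contraction action keeping it characteristic-free, is the standard proof of that fact, and matches the paper's parenthetical remark that it holds for any form $F\notin I$). Your Poincaré-duality alternative for the symmetry of $b$ is a fine extra, though it yields only the symmetry and not the full Gorenstein claim in the statement.
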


\begin{proof}
The fact that $R/(I:L)$ is Gorenstein of socle degree $e-1$ is well-known and easy to prove (in fact, an analogous conclusion holds, more generally, for any form $F\notin I$, not necessarily of degree 1). The decomposition of $h$ is an easy consequence of the exact sequence
$$0\longrightarrow R/(I:L) (-1)\stackrel{\cdot L}{\longrightarrow} R/I \stackrel{}{\longrightarrow} R/(I, L) \longrightarrow 0,$$
which is induced by  multiplication by $L$.
\end{proof}

\section{The main results}

The goal of this section is to deduce some deeper limit consequences from the results of Green that we presented in the previous section. From now on we will specifically focus on Gorenstein and level algebras.

In \cite[Theorem 4]{MNZ1}, Migliore, Nagel and the second author proved a strong lower bound for the second entry, $h_2$, of any Gorenstein $h$-vector of given codimension $h_1=r$ and socle degree $e$. Namely, they showed that
$$h_2\geq \left(r_{(e-1)}\right)^{-1}_{-1}+\left(r_{(e-1)}\right)^{-(e-2)}_{-(e-3)}.$$
Then, in Question 12 of the same article they asked whether their bound is actually always sharp.

In this paper, thanks to the machinery developed in the previous section, we are able to answer negatively to the above question in infinitely many cases, by supplying a new class of $h$-vectors which cannot be Gorenstein. We have:

\begin{thm}\label{10}  %% Added assumption on characteristic
Assume that $\operatorname{char}k\ne 2$. Then, for all integers $m\geq 2$,
$$\left(1,\binom{m+3}{3},(m+1)^2, \binom{m+3}{3},1\right)$$
is not a Gorenstein $h$-vector.
\end{thm}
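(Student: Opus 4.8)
The plan is to argue by contradiction: assume the given vector $h=\left(1,\binom{m+3}{3},(m+1)^2,\binom{m+3}{3},1\right)$ is the $h$-vector of a Gorenstein algebra $A=R/I$ of socle degree $4$, and let $L$ be a general linear form. By Stanley's Lemma~\ref{sta} we may write $h=b+c$, where $b=(1,\beta,\beta,1)$ is the (shifted, hence symmetric) $h$-vector of the Gorenstein algebra $R/(I:L)$ of socle degree $3$, and $c=(1,c_1,c_2,c_3,0)$ is the $h$-vector of $B=R/(I,L)$. Comparing entries gives $c_1=\binom{m+3}{3}-1$, $\beta+c_2=(m+1)^2$ and $\beta+c_3=\binom{m+3}{3}$; subtracting the last two yields the crucial relation $c_3-c_2=\binom{m+3}{3}-(m+1)^2=:D\ge 0$.

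First I would extract a lower bound for $\beta$ from Green's first theorem. Since the $3$-binomial expansion of $h_3=\binom{m+3}{3}$ is the single term $\binom{m+3}{3}$, Theorem~\ref{1} gives $c_3=h'_3\le\binom{m+2}{3}$, whence $\beta=\binom{m+3}{3}-c_3\ge\binom{m+2}{2}$. The decisive point is to upgrade this to a strict inequality. If instead $c_3=\binom{m+2}{3}$, then $\left(h_3,h'_3\right)=\left(\binom{m+3}{3},\binom{m-1+3}{3}\right)$ is exactly the extremal pair of Theorem~\ref{3} (with its integer equal to $m$); since $A$ is Gorenstein of socle degree $4$ it has zero socle in all degrees $\le 2$, so Proposition~\ref{pr3} applies and forces $h_2=\binom{m+2}{2}$. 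As $(m+1)^2-\binom{m+2}{2}=\binom{m+1}{2}>0$ for $m\ge2$, this contradicts $h_2=(m+1)^2$. Hence $c_3\le\binom{m+2}{3}-1$, so that $\beta\ge\binom{m+2}{2}+1$ and therefore $c_2=(m+1)^2-\beta\le\binom{m+1}{2}-1$. It is precisely here that the hypothesis $\operatorname{char}k\ne2$ enters, through Theorem~\ref{3}.

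The contradiction would then come from Macaulay's theorem applied to $B$. Since $c$ is an $O$-sequence, Theorem~\ref{mac} requires $c_3\le\left((c_2)_{(2)}\right)^1_1$, while the relation above forces $c_3=c_2+D$. I would check that these are incompatible for every admissible value $c_2\in\{0,1,\dots,\binom{m+1}{2}-1\}$. It suffices to verify the extreme case $c_2=\binom{m+1}{2}-1$, where $c_3=\binom{m+2}{3}-1$: there one computes $\left((c_2)_{(2)}\right)^1_1=\binom{m+1}{3}+\binom{m}{2}$, and the failure $\binom{m+2}{3}-1>\binom{m+1}{3}+\binom{m}{2}$ reduces, after a Pascal computation, to $m>1$. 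Since the Macaulay growth function $x\mapsto\left((x)_{(2)}\right)^1_1-x$ is non-decreasing, the incompatibility propagates to all smaller $c_2$, completing the contradiction.

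The main obstacle I anticipate is the strict inequality $\beta\ge\binom{m+2}{2}+1$ in the second step: the plain Green bound only gives $\beta\ge\binom{m+2}{2}$, and at that boundary value Macaulay is in fact \emph{satisfied}, with $c_2=\binom{m+1}{2}$ and $c_3=\binom{m+2}{3}$, since $\binom{m+1}{2}$ is a triangular number whose maximal growth is exactly $\binom{m+2}{3}$. Thus the whole argument hinges on invoking the structural rigidity of Theorem~\ref{3} via Proposition~\ref{pr3} to eliminate this single boundary case, and it is exactly this ingredient that introduces the restriction on the characteristic.
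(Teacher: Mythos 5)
Your proposal is correct and takes essentially the same approach as the paper's own proof: Stanley's decomposition $h=b+c$, the Green Theorem~\ref{1} bound $c_3\le\binom{m+2}{3}$, elimination of the extremal case $c_3=\binom{m+2}{3}$ via Proposition~\ref{pr3} (exactly where $\operatorname{char}k\ne 2$ enters), and then a violation of Macaulay's Theorem~\ref{mac} obtained from $c_3-c_2=\binom{m+2}{3}-\binom{m+1}{2}$ together with a monotonicity argument. The only cosmetic differences are that you derive the contradiction in Proposition~\ref{pr3} from $h_2$ rather than $h_1$, and you package the final step as the statement that $x\mapsto\left((x)_{(2)}\right)^1_1-x$ is non-decreasing, which is equivalent to the paper's appeal to the strict monotonicity of $a\mapsto\left(a_{(2)}\right)^1_1$.
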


\begin{rem}\label{1010} It is easy to compute that
$$\left(\binom{m+3}{3}_{(3)}\right)^{-1}_{-1}+\left(\binom{m+3}{3}_{(3)}\right)^{-2}_{-1}=(m+1)^2.$$
Hence, as we said above, Theorem \ref{10} answers \cite[Question 12]{MNZ1} when the codimension is $r=\binom{m+3}{3}$, for any $m\geq 2$, and the socle degree is $4$, in the sense that the bound given by \cite[Theorem 4]{MNZ1} is not sharp in these cases.

In particular, when $m=2$, our result also proves that the smallest example  from \cite{MNZ1}, in terms of codimension and socle degree, of an $h$-vector whose Gorensteinness was still undecided, i.e. $(1,10,9,10,1)$, is {\em not} Gorenstein. Now, the smallest unknown case is $(1,11,10,11,1)$.
\end{rem}

\begin{proof}[Proof of Theorem \ref{10}] Suppose the vector of the statement is the $h$-vector of a Gorenstein algebra $R/I$. We want to reach a contradiction. 
By Lemma \ref{sta}, we can decompose 
$$h=\left(1,\binom{m+3}{3},(m+1)^2, \binom{m+3}{3},1\right)$$ as $b+c=(1,b_2,b_3,1)+(1,c_1,c_2,c_3,c_4=0).$
Since $b$ is the $h$-vector of $R/(I:L)$, which is Gorenstein, by symmetry we have $b_2=b_3$.

Let us choose the form $L$ to be general in $R$. Hence, by Green's Theorem \ref{1}, we have that
$$c_3\leq \left( \binom{m+3}{3}_{(3)}\right)_{0}^{-1}=\binom{m+2}{3}.$$

Since the truncation of a Gorenstein algebra is a level algebra (this fact is a standard one, and can be immediately seen, for instance, using Macaulay's inverse systems), we can apply Proposition \ref{pr3}, with $d=3$. Therefore, since $\binom{m+3}{3}>m+1$ for all positive integers $m$, we would obtain a contradiction from the equality $c_3=\binom{m+2}{3}.$ Thus, $c_3<\binom{m+2}{3}.$

Since $b_2=b_3$, we have that $$c_3-c_2=\binom{m+3}{3}-(m+1)^2=\binom{m+2}{3}-\binom{m+1}{2}.$$
Hence, if we set $c_3=\binom{m+2}{3}-\alpha $, for some integer $\alpha >0$,  we have $c_2=\binom{m+1}{2}-\alpha $.

A standard computation shows that 
$$\left(\left(\binom{m+1}{2}-1   \right)_{(2)} \right)^1_1=\binom{m+2}{3}-m.$$
Since, for any $d$, $(a_{(d)})^1_1$ is a strictly increasing function of $a$, it follows that
$$((c_2)_{(2)})^1_1=\left(\left(\binom{m+1}{2}-\alpha   \right)_{(2)} \right)^1_1<\binom{m+2}{3}-\alpha =c_3,$$
a contradiction to Macaulay's Theorem \ref{mac}.\end{proof}

The next result is another interesting (and fairly general) consequence of Proposition \ref{pr3}, which also concerns the weak Lefschetz property.

\begin{thm}\label{wlp} %% Added assumption on the characteristic
Assume that $\operatorname{char}k \ne 2$. If $h=(1,r,h_2,\dots,h_e)$ is a Gorenstein $h$-vector such that $r=e\geq 2$, then $h_2\geq e$. Moreover, if $h_2=e$, then:
\begin{itemize}
\item[$i)$] $h_i=e$ for all indices $i=1,2,\dots,e-1$.
\item[$ii)$] Any Gorenstein algebra $A$ with $h$-vector $h$ has the weak Lefschetz property.
\end{itemize}
\end{thm}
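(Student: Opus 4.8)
The plan is to run everything through the Stanley decomposition of Lemma~\ref{sta} for a general linear form $L$: write $h=b+c$, where $b=(1,b_2,\dots,b_{e-1},1)$ is the (necessarily symmetric) $h$-vector of the Gorenstein algebra $R/(I:L)$, so $b_i=b_{e+1-i}$, and $c=(1,c_1,\dots,c_{e-1},0)$ is the $h$-vector of $R/(I,L)=A/LA$, so $c_1=h_1-1=e-1$. Note that by Gorenstein symmetry $h_{e-1}=h_1=e$. I first set aside $e=2$, where the only possibility is $h=(1,2,1)$, which trivially has the WLP; so I assume $e\ge 3$, the substantive range. The single estimate that drives the whole argument is one application of Green's Theorem~\ref{1} in degree $e-1$: since $h_{e-1}=e$ and the $(e-1)$-binomial expansion of $e$ is just $\binom{e}{e-1}$,
\[
c_{e-1}=h'_{e-1}\le \left((e)_{(e-1)}\right)^{-1}_0=\binom{e-1}{e-1}=1 .
\]
Hence $b_{e-1}=h_{e-1}-c_{e-1}\ge e-1$, and by symmetry of $b$ we get $b_2=b_{e-1}\ge e-1$.

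From here the inequality $h_2\ge e$ is almost immediate. If $c_2\ge 1$, then $h_2=b_2+c_2\ge(e-1)+1=e$. If instead $c_2=0$, then, $c$ being an O-sequence, Macaulay's Theorem~\ref{mac} propagates the vanishing to $c_i=0$ for all $i\ge 2$; in particular $c_{e-1}=0$ (this is exactly where $e\ge 3$ is needed), so $b_2=b_{e-1}=e$ and again $h_2=e$.

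Now assume $h_2=e$. Together with $b_2\ge e-1$ and $c_2\ge 0$, the equation $b_2+c_2=e$ leaves only $(b_2,c_2)=(e,0)$ or $(b_2,c_2)=(e-1,1)$, and the heart of the proof is to exclude the second. This is precisely where $\operatorname{char}k\ne 2$ enters, through Green's Theorem~\ref{3}. In the excluded case $b_{e-1}=b_2=e-1$, whence $c_{e-1}=h_{e-1}-b_{e-1}=1$; thus in degree $d=e-1$ one has the exact binomial coincidences $h_{e-1}=\binom{e}{e-1}=\binom{m+d}{d}$ and $h'_{e-1}=1=\binom{m-1+d}{d}$ with $m=1$. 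Since the socle of $A$ vanishes in all degrees $\le e-2$, Proposition~\ref{pr3} (the Gorenstein/level packaging of Theorem~\ref{3}) then forces $h_i=\binom{1+i}{i}=i+1$ for all $i\le e-1$, so in particular $h_1=2$, contradicting $h_1=e\ge 3$. Therefore $(b_2,c_2)=(e,0)$.

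Finally, $c_2=0$ yields both conclusions at once. As above, $c_2=0$ propagates to $c_i=0$ for all $i\ge 2$, i.e. multiplication by $L$ is surjective from $A_{i-1}$ onto $A_i$ for every $i\ge 2$, giving $h_i\le h_{i-1}$ there. Read together with symmetry this says $e=h_1\ge h_2\ge\cdots\ge h_{e-1}=h_1=e$, forcing $h_i=e$ for $1\le i\le e-1$, which is $(i)$; and in these now-flat middle degrees the surjections are isomorphisms, while the maps at the two ends automatically have maximal rank, so the general $L$ is a weak Lefschetz element, giving $(ii)$. I expect the main obstacle to be exactly the exclusion of the case $(b_2,c_2)=(e-1,1)$: the rest is an assembly of Stanley's decomposition, a single instance of Green's first theorem, and the vanishing propagation in an O-sequence, but this one ``off by one'' configuration is a genuine O-sequence that Macaulay alone cannot rule out — it is the characteristic-two obstruction of Remark~\ref{rem2.4} — and disposing of it requires spotting the binomial identities that make Theorem~\ref{3} applicable with $m=1$.
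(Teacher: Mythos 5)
Your proposal is correct and follows essentially the same route as the paper: Stanley's decomposition $h=b+c$ for a general linear form, Green's Theorem~\ref{1} giving $c_{e-1}\le 1$, Proposition~\ref{pr3} to rule out the extremal value $c_{e-1}=1$ (via the codimension-two contradiction, which is where $\operatorname{char}k\ne 2$ enters), and Macaulay propagation of $c_2=0$ combined with the symmetry of $b$ and of $h$ to obtain both $i)$ and $ii)$. The only differences are organizational: the paper invokes Proposition~\ref{pr3} at the outset to conclude $c_{e-1}=0$ unconditionally, whereas you defer it to exclude the single residual case $(b_2,c_2)=(e-1,1)$; incidentally, your case-split proof of the bare inequality $h_2\ge e$ avoids the characteristic assumption, which the paper instead notes follows from \cite[Theorem 4]{MNZ1}.
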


\begin{proof} Notice that $h_2\geq e$ immediately follows from the inequality of \cite[Theorem 4]{MNZ1} (which we stated above). We will prove it again here though, since our argument will be useful to prove the rest of the statement. Again, with the usual notation, decompose $h$ as $b+c$ as in Stanley's Lemma \ref{sta}, where $L$ is a general linear form of $R$. Hence, by Green's Theorem \ref{1}, we have $c_{e-1}\leq \left(e_{(e-1)}\right)_0^{-1}=\binom{e-1}{e-1}=1$.

Thus, we can apply Proposition \ref{pr3}, which easily implies that $c_{e-1}=0$. 

It follows, by symmetry, that
$$h_2\geq b_2=b_{e-1}=h_{e-1}-c_{e-1}=h_{e-1}=h_1=e,$$ as desired.

Let us now suppose that $h_2=e$, and let us prove $i)$ and $ii)$. Since, as we just saw, $b_2=e$, we have $c_2=h_2-b_2=e-e=0$. Hence, by Macaulay's theorem, $c_i=0$ for all $i\geq 2$.

Thus, for every $i=2,\dots,e-1$, $$h_i=b_i-c_i=b_i=b_{e+1-i}=h_{e+1-i}=h_{i-1}.$$
Since $h_2=e$, by induction we immediately obtain $h_i=e$ for all $i=1,2,\dots,e-1$, which shows $i)$.

The existence of the weak Lefschetz property for any Gorenstein algebra with this $h$-vector $h$ is now immediate, since we have shown that $c=(1,e-1,0,0,\dots,0)$. This proves $ii)$ and the theorem.\end{proof}

\begin{rem} For $r=3$, the above theorem implies that all Gorenstein algebras with Hilbert function $(1,3,3,1)$ have the weak Lefschetz property, as also shown in \cite{MZ2}. This result, however, is false in characteristic two, since $R[x,y,z]/(x^2,y^2,z^2)$ does not have the weak Lefschetz property. Notice that this  is also a counterexample in characteristic two to both Theorems 3 and 4 of \cite{Gr}. 
\end{rem}

\begin{prop}\label{pr7} Assume that $\operatorname{char} k\ne 2$. 
Let $A$ be a level algebra of codimension at least three with Hilbert function $(h_0,h_1,\dots,h_d)$, where $h_{d-1}\leq h_d=d+1$. Then $h_0\leq h_1\leq \dots \leq h_{d-1}=h_d$ and $A$ has the weak Lefschetz property.
\end{prop}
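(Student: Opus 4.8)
The plan is to pick a general linear form $L$, control the top of the Hilbert function with Green's Theorem \ref{1}, and then propagate injectivity downward using the level hypothesis. First I would set $A'=R/(I,L)$ and write its Hilbert function as $(c_0,c_1,\dots)$, so that $c_d$ is the quantity called $h'_d$ in Theorems \ref{1} and \ref{3}. Since $h_d=d+1=\binom{d+1}{d}$, its $d$-binomial expansion is the single term $\binom{d+1}{d}$, and a direct computation gives $\left((h_d)_{(d)}\right)^{-1}_{0}=\binom{d}{d}=1$; hence Green's Theorem \ref{1} forces $c_d\le 1$. If $c_d=1$, we are exactly in the hypotheses of Proposition \ref{pr3} with $m=1$ (because $h_d=\binom{1+d}{d}$ and $h'_d=\binom{d}{d}=1$), and since $A$ is level its socle vanishes in all degrees below $d$; Proposition \ref{pr3} would then give $h_i=\binom{1+i}{i}=i+1$ for all $i\le d$, and in particular $h_1=2$, contradicting the assumption that $A$ has codimension at least three. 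This is the only place where $\operatorname{char}k\ne 2$ is used, via Proposition \ref{pr3}. Therefore $c_d=0$.

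From $c_d=0$ I read off that $\cdot L\colon A_{d-1}\to A_d$ is surjective, so $h_{d-1}\ge h_d=d+1$; together with the hypothesis $h_{d-1}\le d+1$ this yields $h_{d-1}=h_d=d+1$ and shows that this top multiplication map is an isomorphism. The heart of the argument is then to prove, by descending induction, that $\cdot L\colon A_i\to A_{i+1}$ is injective for every $i=0,1,\dots,d-1$. The base case $i=d-1$ is the isomorphism just obtained. For the inductive step, assume $\cdot L\colon A_{i+1}\to A_{i+2}$ is injective with $i\le d-2$, and let $a\in A_i$ with $La=0$. If $a\ne 0$, then since $i<d$ and $A$ is level (its socle lives only in degree $d$), $a$ is not in the socle, so $x_j a\ne 0$ for some variable $x_j$; but then $x_j a$ is a nonzero element of $A_{i+1}$ with $L\,(x_j a)=x_j(La)=0$, contradicting the injectivity of $\cdot L\colon A_{i+1}\to A_{i+2}$. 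Hence $a=0$, and $\cdot L\colon A_i\to A_{i+1}$ is injective, completing the induction.

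Finally, the injectivity of $\cdot L\colon A_i\to A_{i+1}$ for $i=0,\dots,d-1$ gives $h_i\le h_{i+1}$ throughout that range, so $h_0\le h_1\le\dots\le h_{d-1}=h_d$, which is the asserted monotonicity. Since the Hilbert function is now non-decreasing up to degree $d$, each injective map $\cdot L\colon A_i\to A_{i+1}$ automatically has maximal rank, and the top map is bijective; thus $L$ is a weak Lefschetz element and $A$ has the weak Lefschetz property. I expect the main obstacle to be exactly the passage below the top degree: Green's theorem only pins down the single entry $c_d$, so the downward propagation of injectivity — where the level hypothesis is essential, guaranteeing a variable that does not annihilate $a$ — is the crux of the proof, whereas establishing $c_d=0$ is comparatively routine once Proposition \ref{pr3} is in hand.
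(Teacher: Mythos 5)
Your proof is correct and follows essentially the same route as the paper's: Green's Theorem \ref{1} plus Proposition \ref{pr3} rule out $\dim_k(A/(L))_d=1$ via the codimension hypothesis, forcing surjectivity and then bijectivity of $\cdot L\colon A_{d-1}\to A_d$, after which injectivity propagates downward using the level hypothesis. The only difference is one of exposition: the paper compresses the downward propagation into a single sentence, whereas you spell out the descending induction with the socle argument explicitly --- a detail worth having, but not a different proof.
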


\begin{proof}
  Let $L$ be a general linear form. Then $\dim_k (A/(L))_d\le 1$ according to Green's Theorem~\ref{1}. By Proposition~\ref{pr3}, we have that $\dim_k (A/(L))_d=1$ implies that $h_i=i+1$ for all $i$, contradicting the assumption that the codimension is at least three. Hence the multiplication by $L$ from $A_{d-1}$ to $A_d$ is surjective, but since $h_{d-1}\le h_d$, we have to have that $h_{d-1}=h_d$. Because the multiplication by $L$ is now also injective in degree $d-1$, it has to be injective in all lower degrees by the assumption that $A$ is level, and therefore we can conclude that $A$ has the weak Lefschetz property. 
\end{proof}

\begin{prop}\label{pr14a41}
  Any artinian Gorenstein algebra with Hilbert function $(1,4,a,4,1)$ has the weak Lefschetz property if $\operatorname{char}k\ne 2$. 
\end{prop}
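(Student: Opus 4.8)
The plan is to fix a general linear form $L$ and read the weak Lefschetz property directly off the ranks of the multiplication maps $\cdot L\colon A_i\to A_{i+1}$, which are exactly the entries recorded by Stanley's decomposition (Lemma \ref{sta}). Writing $h=(1,4,a,4,1)=b+c$, where $b=(1,b_2,b_3,1)$ is the shifted $h$-vector of the Gorenstein algebra $R/(I:L)$ and $c=(1,3,c_2,c_3,0)$ is the $h$-vector of $R/(I,L)$, the exact sequence of the lemma identifies $b_i$ with the rank of $\cdot L\colon A_{i-1}\to A_i$. So WLP for the general $L$ amounts to checking that $b_1,b_2,b_3,b_4$ attain the maximal possible ranks $1,\min(4,a),\min(a,4),1$. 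The two outer ranks are automatic, since $b_1=b_4=1$ because $R/(I:L)$ is Gorenstein of socle degree $3$, and the symmetry of that $h$-vector forces $b_2=b_3$. Thus the entire statement collapses to showing that the common middle value $b_2=b_3$ equals $4$.

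To control this value I would bound the complementary vector $c$ using Green. By Theorem \ref{1}, $c_3\le\left((h_3)_{(3)}\right)^{-1}_0=\binom{3}{3}=1$, so $c_3\in\{0,1\}$. The case $c_3=1$ is precisely the sharp case of Green's second restriction theorem: since $h_3=4=\binom{1+3}{3}$ and $c_3=1=\binom{3}{3}$, and since the socle of the Gorenstein algebra $A$ vanishes in all degrees $\le 2$, Proposition \ref{pr3} applies with $d=3$ and $m=1$ and forces $h_i=\binom{i+1}{i}=i+1$ for $0\le i\le 3$; in particular $h_1=2$, contradicting $h_1=4$. Hence $c_3=0$, which gives $b_3=h_3-c_3=4$ and therefore $b_2=b_3=4$.

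Finally, $b_2=4\le h_2=a$ shows $a\ge 4$ (so the stated $h$-vector is in fact forced to satisfy $a\ge 4$), and the ranks of the four multiplication maps by $L$ are $1,4,4,1$, each maximal in its degree; hence $A$ has the weak Lefschetz property. The only place characteristic enters is the use of Proposition \ref{pr3}, equivalently Green's Theorem \ref{3}, which requires $\operatorname{char}k\ne 2$, matching the hypothesis; the monomial complete intersection $k[x_1,x_2,x_3,x_4]/(x_1^2,x_2^2,x_3^2,x_4^2)$ has $h$-vector $(1,4,6,4,1)$ and fails WLP in characteristic two, so the assumption is genuinely needed. I expect no computational obstacle here; the only real content is the structural observation that the two a priori dangerous middle maps are tied together by Gorenstein symmetry, and that the extremal value $c_3=1$ left open by Green's upper bound is exactly the configuration ruled out by his sharpness result.
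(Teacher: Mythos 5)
Your proof is correct and takes essentially the same route as the paper's: Stanley's decomposition for a general $L$, Green's Theorem \ref{1} to get $c_3\le 1$, and Proposition \ref{pr3} (with $d=3$, $m=1$) to rule out the extremal case $c_3=1$, whence $c_3=0$, $b_2=b_3=4$ by Gorenstein symmetry, $a\ge 4$, and all four multiplication maps have maximal rank. The only cosmetic difference is that you get surjectivity of $\cdot L\colon A_3\to A_4$ directly from $b_4=1$ in the decomposition, whereas the paper argues it separately via the socle of the Gorenstein algebra.
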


\begin{proof}
Using the above notation, suppose that $A$ has Hilbert function $h=(1,4=b_1+c_1,a=b_2+c_2,4=b_3+c_3,1=b_4)$ and does not have the weak Lefschetz property. Then the quotient of $A$ by a general linear form, $A/(L)$, has $\dim_k (A/(L))_3=c_1=1$, which is the maximal possible according to Green's Theorem~\ref{1}. By Proposition~\ref{pr3} we then have that $\dim_k A_1=2$, contradicting the assumption on $A$. Thus, $c_3=0$, $a\geq 4$, and the map $\cdot L:A_2\longrightarrow A_3$ is surjective. 

Since $c_3=0$, we have that $b_3=4=b_2$, and so $(b_1,b_2,b_3,b_4)=(1,4,4,1)$. Hence $\dim_k(R/(I:L))_1=b_1=4$, that is, $\ker (\cdot L)=(I:L)/I=0$. In other words, the multiplication map $\cdot L:A_1\longrightarrow A_2$ is injective.

Assume $L\cdot A_3=0$. Then $L$ is a socle element of $A$ in degree 3, which is a contradiction since $A$ is Gorenstein. In other words, the multiplication map $\cdot L:A_3\longrightarrow A_4$ is surjective.
\end{proof}

\begin{prop} 
    Let $A$ be a level algebra with Hilbert function $(h_0,h_1,h_2,\dots,h_d)$. If 
  $$
  h_d= \binom{m+d}{d}, \qquad h_{d-1}=\binom{m+d-1}{d-1}+1
  $$
  then $A$ has the weak Lefschetz property if $\operatorname{char}k\ne 2$. 
\end{prop}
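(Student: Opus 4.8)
The plan is to show that for a general linear form $L$ the multiplication map $\cdot L\colon A_i\to A_{i+1}$ is injective in every degree $i\le d-1$; since an injective linear map forces $h_i\le h_{i+1}$, this simultaneously proves that $h$ is nondecreasing and that every map has maximal rank, which is precisely the WLP (and the same general $L$ works throughout). The relevant structural fact is that the socle degree of $A$ is $d$ and $A$ is level, so its socle vanishes in all degrees $\le d-1$, which is exactly the hypothesis needed to invoke Proposition~\ref{pr3}.

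First I would fix a general $L$ and examine the top map $\cdot L\colon A_{d-1}\to A_d$. Writing $h_d'=\dim_k(A/(I,L))_d$, the identity $A_d/(L\cdot A_{d-1})=(A/(L))_d$ gives $h_d'=h_d-\operatorname{rank}(\cdot L\colon A_{d-1}\to A_d)$, whence the trivial bound $h_d'\ge h_d-h_{d-1}$, with equality exactly when the top map is injective. Since the $d$-binomial expansion of $h_d=\binom{m+d}{d}$ is the single term $\binom{m+d}{d}$, Green's Theorem~\ref{1} yields the upper bound
$$h_d'\le\left(\binom{m+d}{d}_{(d)}\right)^{-1}_{0}=\binom{m+d-1}{d}=h_d-h_{d-1}+1.$$
Thus $h_d'$ is pinned to one of the two values $h_d-h_{d-1}$ or $\binom{m+d-1}{d}=h_d-h_{d-1}+1$.

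The key step is to exclude the larger value. If $h_d'=\binom{m+d-1}{d}=\binom{(m-1)+d}{d}$, then $h_d$ and $h_d'$ satisfy precisely the hypotheses of Theorem~\ref{3} (this is where $\operatorname{char}k\ne 2$ enters), and since $A$ is level its socle vanishes in all degrees $\le d-1$; hence Proposition~\ref{pr3} applies and forces $h_{d-1}=\binom{m+d-1}{d-1}$, contradicting the assumption $h_{d-1}=\binom{m+d-1}{d-1}+1$. Therefore $h_d'=h_d-h_{d-1}$, i.e. $\cdot L\colon A_{d-1}\to A_d$ is injective.

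It then remains to propagate injectivity downward. The kernel $K=(0:_A L)$ is a homogeneous ideal of $A$ killed by $L$, and I would argue that $K_{d-1}=0$ forces $K_i=0$ for all $i\le d-1$: given $0\ne a\in K_i$ with $i<d$, levelness (socle concentrated in degree $d$) shows that $a$ survives to the top, so $a\cdot A_{d-1-i}\ne 0$, and any $b\in A_{d-1-i}$ with $ab\ne 0$ produces a nonzero element $ab\in K_{d-1}$, since $L(ab)=b(La)=0$ — a contradiction. Hence $\cdot L$ is injective in every degree $\le d-1$, so $h_0\le h_1\le\cdots\le h_d$ and all maps have maximal rank, giving the WLP. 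The main obstacle is the middle step: recognizing that the seemingly innocuous ``$+1$'' in $h_{d-1}$ is exactly what rules out the extremal Green configuration through Proposition~\ref{pr3}, thereby squeezing $h_d'$ to the single value that forces injectivity; the downward propagation, while requiring some care with the grading, is a routine levelness argument.
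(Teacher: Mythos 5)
Your proof is correct and follows essentially the same route as the paper's: Green's Theorem~\ref{1} pins $\dim_k(A/(L))_d$ to at most $\binom{m-1+d}{d}$, the extremal value is ruled out via Proposition~\ref{pr3} because it would force $h_{d-1}=\binom{m+d-1}{d-1}$, and injectivity of $\cdot L$ in degree $d-1$ then propagates to all lower degrees by levelness. The only difference is presentational — you phrase the top-degree step as a dichotomy rather than a contradiction and spell out the downward-propagation argument that the paper leaves as a one-line remark.
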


\begin{proof}
  Suppose that the multiplication by a general linear form, $\cdot L:A_{d-1}\longrightarrow A_d$,  fails to be injective.
  Then the Hilbert function of the quotient $A/(L)$ in degree $d$ is at least $$h_d-h_{d-1}+1=\binom{m+d}{d}-\binom{m+d-1}{d-1}=\binom{m-1+d}{d},$$ which is also the maximal possible according to Green's upper bound. Hence, by Proposition~\ref{pr3}, the Hilbert function in degree $d-1$ has to be $\binom{m+d-1}{d-1}$, contradicting the assumption. We conclude that $\ker (\cdot L)$ is trivial in degree $d-1$, and  since $A$ is level, it has to be trivial in all lower degrees as well.
\end{proof}

\begin{ex}
  Every level algebra with Hilbert function starting with
$$
  1,4,7,11,15, \dots
$$
has the weak Lefschetz property at least up to degree three. 
\end{ex}

The next result is an application of Proposition \ref{pr4}. It proves the non-existence of two specific Gorenstein $h$-vectors of very small socle degree whose nature was still unknown. This further extends the class of pairs (codimension, socle degree) where the inequality \cite[Theorem 4]{MNZ1} is not sharp.

\begin{prop}\label{444} Assume that $\operatorname{char}k=0$. 
\begin{itemize}
\item[$i)$] All Gorenstein $h$-vectors of the form $(1,14,a,a,14,1)$ are unimodal.
\item[$ii)$] If $(1,18,a,t,a,18,1)$ is a Gorenstein $h$-vector, then $a\geq 18$.
\end{itemize}
\end{prop}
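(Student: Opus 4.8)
The plan is to handle both parts by a single mechanism that parallels the proof of Theorem~\ref{10}, but invokes the plane-curve statement (Proposition~\ref{pr4}) in place of Proposition~\ref{pr3}. Assume $A=R/I$ is Gorenstein with the given $h$-vector, choose $L$ to be a general linear form, and decompose $h=b+c$ as in Stanley's Lemma~\ref{sta}; here $b$ is the $h$-vector of the Gorenstein algebra $R/(I:L)$ of socle degree $e-1$, and $c$ is the $h$-vector of $R/(I,L)$. Write $n=h_1$ (so $n=14$ in part $i)$ and $n=18$ in part $ii)$) and let $p=e-1$, the top index at which $c$ can be nonzero (since $c_e=0$). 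The symmetry of the Gorenstein vector $b$ gives $b_p=b_2$, and combining $h_p=b_p+c_p=b_2+c_p$ with $h_2=a=b_2+c_2$ yields the clean identity $a-n=c_2-c_p$. Thus everything reduces to the single inequality $c_2\ge c_p$.

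The second step is to bound the top entry $c_p$. A short binomial computation shows that $h_p=n$ is exactly the plane-curve value $\binom{p+2}{2}-\binom{p-m+2}{2}$ with $m=4$ (namely $14=\binom{6}{2}-\binom{2}{2}$ for $p=4$, and $18=\binom{7}{2}-\binom{3}{2}$ for $p=5$), and that Green's Theorem~\ref{1} gives $c_p\le\left((h_p)_{(p)}\right)^{-1}_0=4$. I would then rule out the extremal case $c_p=4$: because $A$ is Gorenstein its socle lives only in degree $e$, so $A$ has zero socle in all degrees $\le p-1$, and Proposition~\ref{pr4} (here using $\operatorname{char}k=0$) would force $h$ to coincide up to degree $p$ with the Hilbert function of a plane curve of degree $4$; in particular $h_1=\binom{3}{2}-\binom{-1}{2}=3$, contradicting $h_1=n$. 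Hence $c_p\le 3$.

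The heart of the matter, and the step I expect to be the main obstacle, is upgrading $c_p\le 3$ to $c_2\ge c_p$. I would argue by cases on the size of $c_2$. If $c_2\ge 3$, then $c_p\le 3\le c_2$ immediately. If $c_2\le 2$, I would use the Macaulay non-growth phenomenon for the $O$-sequence $c$ (Theorem~\ref{mac}): whenever an entry $g_j$ satisfies $g_j\le j$, its $j$-binomial expansion is the sum of $g_j$ ones $\binom{j}{j}+\binom{j-1}{j-1}+\cdots$, and one checks that its Macaulay bound $\left((g_j)_{(j)}\right)^1_1$ equals $g_j$ again, so the sequence cannot grow past $c_2$ in any higher degree; since $c_2\le 2\le j$ for all $j\ge 2$, this gives $c_p\le c_2$. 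Either way $c_2\ge c_p$, so $a=n+(c_2-c_p)\ge n$, which is precisely the unimodality of $(1,14,a,a,14,1)$ in part $i)$ and the bound $a\ge 18$ in part $ii)$. I anticipate the only delicate bookkeeping to be verifying that the Green bound is exactly $m=4$ (so that Proposition~\ref{pr4} applies with $m\le p$) and formulating the non-growth step cleanly.
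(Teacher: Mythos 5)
Your proposal is correct, and it relies on exactly the same ingredients as the paper's own proof: Stanley's decomposition $h=b+c$ with $L$ general, Green's Theorem~\ref{1} to get $c_{e-1}\le 4$, Proposition~\ref{pr4} (with $d=e-1$, $m=4$) to exclude the extremal value $c_{e-1}=4$, the symmetry $b_2=b_{e-1}$ of the Gorenstein vector $b$, and Macaulay non-growth for the O-sequence $c$. The difference is purely in the logical organization, and it is in your favor: you isolate the identity $a-h_1=c_2-c_{e-1}$ and then prove the single inequality $c_{e-1}\le c_2$ by a two-case analysis (if $c_2\ge 3$ this is immediate from $c_{e-1}\le 3$; if $c_2\le 2$ it follows from Macaulay non-growth, since an entry $c_j\le j$ forces $c_{j+1}\le c_j$). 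The paper instead assumes $a\le h_1-1$ for contradiction and runs an iterative squeeze, alternating the symmetry of $b$ with Macaulay's theorem to push $c_2$ down step by step ($c_2\le 2$, then $\le 1$, then $=0$) until the forced value $c_{e-1}=1>0=c_2$ contradicts Macaulay; part $ii)$ is then handled ``mutatis mutandis.'' Your version is essentially that iteration collapsed into one pass, it treats both parts uniformly, and it makes transparent exactly what is being proved ($c_{e-1}\le c_2$). All your numerical verifications check out: $14=\binom{6}{2}-\binom{2}{2}$ and $18=\binom{7}{2}-\binom{3}{2}$ are the plane-curve values with $m=4$, the Green bound $\left((h_{e-1})_{(e-1)}\right)^{-1}_{0}$ equals $4$ in both cases, and Proposition~\ref{pr4} would force $h_1=3$, contradicting $h_1=14$ or $18$.
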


\begin{rem}\label{4444}
  It follows from Proposition \ref{444}, $i)$ that all Gorenstein $h$-vectors of socle degree 5 and codimension $r\leq 14$ are unimodal, extending \cite[Corollary 6]{MNZ1} from $r=13$ to $r=14$.
\end{rem}

\begin{proof}[Proof of Proposition \ref{444}]
$i)$ Suppose that $h=(1,14,a,a,14,1)$ is not unimodal, i.e. that $a\leq 13$. Again, let us write $h=b+c$ with the usual notation. Applying Green's Theorem \ref{1}, we obtain $c_4\leq \left(14_{(4)}\right)_0^{-1}=4$.

Now, by Proposition \ref{pr4} with $d=m=4$, we have that if $c_4=4$, then $h$ starts with $(1,3,\dots)$, which is not the case. Hence, $c_4\leq 3$. Thus, by symmetry, $b_4=b_2\geq 14-3=11$. It follows that $c_2=a-b_2\leq 13-11=2$.

By Macaulay's Theorem \ref{mac}, this actually implies that $c_4\leq 2$. Therefore, we now get $b_4=b_2\geq 14-2=12$, which similarly gives $c_2\leq 1$.

Repeating the same reasoning once more, we obtain $c_4\leq 1$, hence $b_2\geq 13$, which forces $b_2=13=a$ and $c_2=a-13=0$. Thus, $b_4=13$, and therefore $c_4=14-13=1$, a contradiction to Macaulay's theorem. This proves part $i)$.

$ii)$ The idea is similar. We suppose that $h=(1,18,a,t,a,18,1)$ is Gorenstein with $a\leq 17$, and seek a contradiction. By writing $h=b+c$, we now consider $c_5$, which satisfies $c_5\leq \left(18_{(5)}\right)_0^{-1}=4$ by Theorem \ref{1}.

But, if $c_5=4$, by Proposition \ref{pr4} with $d=5$ and $m=4$, we would have $h$ of codimension 3 and not 18. Therefore $c_5\leq 3$.

The symmetry of $b$ gives $b_2=b_5=18-c_5\geq 15$. Thus, $c_2=a-b_2\leq 17-15=2$, which implies, by Macaulay's theorem $c_4\leq 2$. The rest of the argument now follows {\em mutatis mutandis} that of part $i)$, and the proof of the proposition is complete.\end{proof}

\section*{Acknowledgements}
We are thankful to Mark Green for personal communications concerning the assumption on the characteristic in the last part of his paper~\cite{Gr}, including a new argument which improved the theorems even more than we were hoping for.   %%% Changed the acknowledgement to Green after his new argument

We would like to thank Tony Iarrobino for several valuable comments on the draft version of this paper. We are also grateful to the referee for useful comments, and to Giulio Caviglia for providing new proofs~\cite{Ca} of Green's theorems~\cite[Theorems 3 and 4]{Gr} in characteristic zero using generic initial ideals. The project started with a visit of the second author to the first author supported by a grant from the G{\"o}ran Gustafsson Foundation.

\section{Appendix}
Green has provided us with new arguments which complete the proofs of Theorem 3 and Theorem 4, and has given us permission to include a sketch of those arguments based on his ideas, following also the lines of the proofs contained in the original paper. 

\begin{proof}[Sketch of proof of Theorem 3]
The proof goes with induction on the degree $d$ and on $m$, where $W$ is a linear subspace of codimension $c=\binom{d+m}{d}$ in $H^0(\mathcal O_{\mathbb P^r}(d))$.

By induction we can assume that for two general linear hyperplanes $H$ and $H'$ we get 
$$
W_{H'}(-(H\cap H'))=I_{d-1}(-P_H\cap H'),
$$
where the linear space $P_H$, depending on $H$, has dimension $m$. In order to finish the proof, we have to conclude that $P_H$ is in fact  constant and does not depend on $H$. 

We use that $P_{H_1}\cap H = P_{H_2}$, for  a general hyperplane $H$ in the pencil of hyperplanes spanned by $H_1$ and $H_2$. For each point $p$ in $P_{H_1}$ not in $H_1$, we can see that $p$ also has to be in $H_2$ by taking $H$ to be the hyperplane spanned by $H_1\cap H_2$ and $p$.  Thus, this shows that $P_{H_1}=P_{H_2}$, unless $P_H\subseteq H$ for a general hyperplane $H$. Since 
$W(-H)=I_{d-1}(P_H) = \ell(H)R_{d-1}$,
we have that $P_H\subseteq H$ implies  $\ell(H)^2R_{d-2}\subseteq W$, where $\ell(H)$ is the linear form corresponding to the hyperplane $H$. If the characteristic of $k$ is different from $2$, the squares of the linear forms spans $R_2$, which shows that $P_H\not\subseteq H$, for a general $H$. 
\end{proof}

\begin{proof}[Sketch of proof of Theorem 4]
The proof uses induction on $d$ and starts in the case where $d=k$. Using the same technique as in the beginning of the previous theorem, we get that $W(-H)=I_{k-1}(P_H)$, where $P_H$ is a two-dimensional linear space depending on $H$. We can use the same idea as above to show that, in fact, $P_H$ does not depend on $H$ when the characteristic of the field is different from $2$. We then see that $W=I_d(P)+(F)$, for some form $F$, since $\dim_k W = \dim_k I_d(P)+1$.

Now, we consider the case $d>k$, where  by induction one can assume that the ideal is given by $W(-H)=I_{d-1}(C_H)$, where $C_H$ is a plane curve in a fixed plane, but potentially dependent on the hyperplane $H$. We need to prove that $C_H=C$.  

We have $$W_H(-(H_1\cap H)) = I_{d-1}(C_{H_1}\cap H) = W_H(-(H_2\cap H)) = I_{d-1}(C_{H_2}\cap H),$$
which proves that 
$$
C_{H_1}\cap H = C_{H_2}\cap H
$$
for $d>k$. As before, we take a general point $p$ of a component of $C_{H_1}$ outside $H_1$ and look at the hyperplane $H$ spanned by $p$ and $H_1\cap H_2$. Then we have that $p\in C_{H_1} \cap H = C_{H_2}\cap H$, meaning that $p$ also lies on $C_{H_2}$. If none of the components of $C_{H_1}$ is contained in $H_1$, we conclude that $C_{H_1}=C_{H_2}$.  In general, we have to pass to the algebraic closure of $k$ to find points on $C_H$, but this is not a problem since the conclusion of the theorem is still valid over the original field $k$.

A curve in the plane is completely determined by its intersections with the general elements of a pencil of lines through any point not on the curve, unless the curve has a linear component through the point. Thus we only need to exclude the cases where $C_H$ has a linear component in $H$, i.e., if $F_H = \ell(H)^m G_H$, for some $G_H$ not divisible by $\ell(H)$. Now, $W:\ell(H)=W(-H) = I_{d-1}(C_H)=\ell(H)^mG_HR_{d-k-1}$ means that $\ell(H)^{m+1}G_HR_{d-k-1}\subseteq W$. Hence we use the previous argument to show that $G_H=G$, for some polynomial $G$, since this does not have any component contained in $H$ for a general $H$. Thus we have that 
$$
\ell(H)^{m+1}G R_{d-k-1} \subseteq W,
$$
for a general $H$. If the characteristic is zero, or sufficiently large compared to the degree, we know that the $m+1$ powers of the linear forms span  $R_{m+1}$, and $W$ contains $G\cdot R_{d-k+m}$. However, $\dim_k W= \dim_k R_{d-k}$, which means that $m=0$. We need that the characteristic does not divide $m+1$, which is at most $k+1$. In order for this  to be true for any $m=1,2,\dots,k$, we have that the characteristic is larger than $k+1$. 
\end{proof}

\begin{rem}
  In characteristic $p$, there are examples that show that Theorem 4 cannot be extended. Let $W = (x^pG,y^pG,z^pG)_d\subseteq R=k[x,y,z]$, where $G\in R_{d-p}$. Then we have that
$$
\operatorname{codim} W = \binom{d+2}{2}-3 = \binom{d+1}{d}+\binom{d}{d-1}+\cdots+\binom{3}{2}
$$
and 
$$
\operatorname{codim} W_H = d-1.
$$
\end{rem}

%%%%%%%%%%%%%%%%%%%%%%%%%%%%%%%%%%%%%%%%%%%%

\end{document}